\documentclass{amsart}

\usepackage{xcolor}
\usepackage{setspace}

\newtheorem{theorem}{Theorem}[section]

\newtheorem{lem}[theorem]{Lemma}

\numberwithin{equation}{section}
\title[Lipschitz perturbations]{Eliminating positive-measure level sets by small Lipschitz perturbations}

\author{Sorina Barza}
\address{Department of Mathematics and Computer Science, Karlstad University, Universitetsgatan 2, 65188 Karlstad, Sweden}
\email{sorina.barza@kau.se}

\author{Martin Lind}
\address{Department of Mathematics and Computer Science, Karlstad University, Universitetsgatan 2, 65188 Karlstad, Sweden}
\email{martin.lind@kau.se}

\subjclass[2020]{26A16, 46E35}

\keywords{Level sets, Lipschitz continuity, Lipschitz perturbation}

\begin{document}

\begin{abstract}
    We establish a new regularity phenomenon of continuous functions. Specifically, given any continuous function $f$ and arbitrary $\epsilon>0$, we construct a Lipschitz perturbation $g_\epsilon$ whose Lipschitz seminorm is less than $\epsilon$ such that every level set of $f+g_\epsilon$ has Lebesgue measure zero. 
\end{abstract}
\maketitle

\section{Introduction}

Let $I=[a,b]\subset\mathbb{R}$ and $f:[a,b]\rightarrow\mathbb{R}$. Denote by $C(I)$ the space of continuous functions on $I$ and ${\rm Lip}(I)$ the space of Lipschitz continuos functions on $I$. Recall that $f$ is called Lipschitz continuous on $I$ if  
\begin{equation}
    \nonumber
    |f|_{{\rm Lip}(I)}:=\sup_{x,y\in I, x\neq y}\frac{|f(x)-f(y)|}{|x-y|}<\infty.
\end{equation}
The objective of this note is to prove the following rather curious approximation theorem.
\begin{theorem}
    \label{LipschitzApprox}
    Let $f\in C(I)$. For any $\epsilon>0$ there exists $g_\epsilon\in{\rm Lip}(I)$ such that $|g_\epsilon|_{{\rm Lip}(I)}<\epsilon$ and for every $y\in\mathbb{R}$ there holds
    \begin{equation}
        \nonumber
        \mu(\{x\in I:f(x)+g_\epsilon(x)=y\})=0.
    \end{equation}
    ($\mu(E)$ denotes the linear Lebesgue measure of the measurable set $E$.)
\end{theorem}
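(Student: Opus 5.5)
The plan is to take the perturbation to be linear, $g_\epsilon(x)=tx$ for a suitably chosen constant $t\in(0,\epsilon)$; then $|g_\epsilon|_{{\rm Lip}(I)}=t<\epsilon$. It remains to show that for all but countably many $t$, every level set of $f+tx$ is null. Since $(0,\epsilon)$ is uncountable, such a $t$ then exists.

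For $t,y\in\mathbb{R}$ set $E_{t,y}=\{x\in I: f(x)+tx=y\}$. Each $E_{t,y}$ is closed because $f$ is continuous, so it is measurable. The key observation concerns two distinct pairs $(t,y)\neq(s,y')$.
\begin{itemize}
\item If $t\neq s$, then $x\in E_{t,y}\cap E_{s,y'}$ forces $(t-s)x=y'-y$. Hence the intersection has at most one point.
\item If $t=s$ and $y\neq y'$, the two sets are disjoint.
\end{itemize}
Thus the family $\{E_{t,y}\}_{(t,y)\in\mathbb{R}^2}$ consists of pairwise almost disjoint sets: any two distinct members meet in a set of measure zero.

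Next we show that only countably many members of this family have positive measure. Fix $n\in\mathbb{N}$ and let $\mathcal F_n$ be the collection of pairs $(t,y)$ with $\mu(E_{t,y})>1/n$. Take finitely many distinct pairs in $\mathcal F_n$, say $N$ of them. Their sets are pairwise almost disjoint, so the measure of their union equals the sum of their measures. This gives $N/n<\mu(I)=b-a$. Hence $\mathcal F_n$ has fewer than $n(b-a)$ elements. Consequently $\bigcup_n\mathcal F_n=\{(t,y):\mu(E_{t,y})>0\}$ is countable.

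Projecting onto the first coordinate, the set $T$ of slopes $t$ for which some level set of $f+tx$ has positive measure is countable. Choose any $t\in(0,\epsilon)\setminus T$ and put $g_\epsilon(x)=tx$. Then for every $y$ we have $\mu(\{f+g_\epsilon=y\})=\mu(E_{t,y})=0$, which is the statement of Theorem~\ref{LipschitzApprox}.

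The only step needing care is the finite-additivity count in the third paragraph. It requires measurability of the $E_{t,y}$, which holds because they are closed, and the almost-disjointness established above. I do not expect a genuine obstacle beyond these points.
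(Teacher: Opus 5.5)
Your proof is correct and complete, and it takes a genuinely different and much more elementary route than the paper's.

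\textbf{Your route.} You apply the counting idea of Lemma~\ref{countLemma} not to $\{u^{-1}(y)\}_{y}$ but to the two-parameter family $\{E_{t,y}\}_{(t,y)}$. The key point is that distinct members meet in at most one point. Hence only countably many slopes are bad, and an explicit affine $g_\epsilon(x)=tx$ works.

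\textbf{The paper's route.} Lemma~\ref{constructiveLemma} builds tent-shaped perturbations on the components of open neighbourhoods of finitely many positive-measure level sets of $u$; the remaining such level sets have small total measure. This lemma is then fed into a Baire category argument in $X_\epsilon=\{g\in{\rm Lip}(I):|g|_{{\rm Lip}(I)}\le\epsilon\}$ with the supremum metric, which shows that each $F_m$ is closed and has empty interior.

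\textbf{What your route buys.} You need neither the construction nor Baire's theorem. You also get $|g_\epsilon|_{{\rm Lip}(I)}<\epsilon$ directly, whereas the paper's argument literally yields $\le\epsilon$ and must be run with $\epsilon/2$. Your perturbation is affine, and all but countably many slopes work. Continuity of $f$ is used only for measurability of the level sets, so your argument covers every measurable $f$.

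\textbf{What the paper's route buys.} It gives genericity: the admissible perturbations form a dense $G_\delta$ subset of $X_\epsilon$. Density also follows from your idea. Given $g_0\in X_\epsilon$, apply your argument to $f+(1-s)g_0$ with small $s>0$ and an admissible $t\in(0,s\epsilon]$. This keeps the seminorm at most $\epsilon$ and stays uniformly close to $g_0$. It also handles $g_0$ with $|g_0|_{{\rm Lip}(I)}=\epsilon$, where the paper's choice of $\lambda$ in the proof of (b) degenerates to $0$ although Lemma~\ref{constructiveLemma} requires $\lambda>0$. The $G_\delta$ part, however, comes from the closedness of the $F_m$ proved in (a), which your argument does not address.
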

In essence, Theorem \ref{LipschitzApprox} asserts that any continuous function $f$ can be perturbed by an arbitrarily small function $g_\epsilon\in{\rm Lip}(I)$ so that the perturbed function $f+g_\epsilon$ has no level set of positive measure.

Before outlining the proof of Theorem \ref{LipschitzApprox}, we briefly describe the motivation behind the result. The theorem arose naturally in the context of rearrangement inequalities \cite{BaLi26a}. The \emph{non-increasing rearrangement} of a function $f$ (see e.g. \cite[Chapter 2]{BeSh88}) is defined by
\begin{equation}
    \nonumber
    f^*(t)=\inf\left\{y>0:\mu(\{x\in I:|f(x)|>y\})<t\right\}\quad(t>0).
\end{equation}
It is well known, and easy to verify, that
$f^*$ is strictly decreasing if and only if every level set $\{x:f(x)=y\}~~(y\in\mathbb{R})$ has measure zero. An immediate consequence of Theorem \ref{LipschitzApprox} is therefore that any continuous function can be perturbed by a function with arbitrarily small Lipschitz seminorm so that the resulting function has a strictly decreasing rearrangement. In \cite{BaLi26a}, we use this observation to establish a variant of the fractional P\'{o}lya--Szeg\H{o} principle for functions on the line.

We finally outline the idea behind the proof of Theorem \ref{LipschitzApprox}. It is instructive to consider how a perturbation $g_\epsilon$ can be constructed for a concrete and reasonably non-trivial function with level sets of positive measure, such as the Cantor function $F_C$. In this case, one may define $g_\epsilon$ so that, on each interval where $F_C$ is constant, its graph forms an isosceles triangle with sufficiently small height. The height is chosen so that the slope of $g_\epsilon$ remains bounded in magnitude by $\epsilon$.
It is not difficult to convince oneself that $F_C+g_\epsilon$ has no level set with positive measure. The general case is subtler, since the level sets of an arbitrary continuous function can be quite intricate. In particular, a level set may have positive measure while still having empty interior. The construction outlined above is generalized in Lemma \ref{constructiveLemma}.  Nevertheless, the proof of Theorem \ref{LipschitzApprox} itself is non-constructive, as it relies on the Baire category theorem.

\section{Proof of Theorem \ref{LipschitzApprox}}

For $y\in\mathbb{R}$ and any function $u$, we denote
\begin{equation}
    \nonumber
    u^{-1}(y)=\{x\in I: u(x)=y\}.
\end{equation}
\begin{lem}
    \label{countLemma}
    Let $u:[a,b]\rightarrow\mathbb{R}$ and denote
    \begin{equation}
        \nonumber
        B(u)=\{y\in\mathbb{R}:\mu(u^{-1}(y))>0\}.
    \end{equation}
    Then $B(u)$ is countable.
\end{lem}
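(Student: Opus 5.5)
The plan is the standard disjointness-plus-finite-total-measure argument. One subtlety comes first. The statement only says $u:[a,b]\to\mathbb{R}$, with no measurability assumption, so $\mu(u^{-1}(y))$ should be read as the Lebesgue outer measure $\mu^*$. Equivalently, we restrict to measurable $u$, which is the case needed later, since $u=f+g_\epsilon$ is continuous. The argument uses only that the sets $u^{-1}(y)$, $y\in\mathbb{R}$, are pairwise disjoint subsets of $I$ and that $\mu(I)=b-a<\infty$.

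First, write $B(u)=\bigcup_{n\ge1}B_n$, where
\begin{equation}
\nonumber
B_n=\{y\in\mathbb{R}:\mu(u^{-1}(y))>1/n\}.
\end{equation}
It then suffices to show that each $B_n$ is finite, since a countable union of finite sets is countable.

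Second, fix $n$ and take any finitely many distinct $y_1,\dots,y_k\in B_n$. The sets $u^{-1}(y_j)$ are pairwise disjoint, because $u$ is a function. In the measurable case, finite additivity and monotonicity give
\begin{equation}
\nonumber
\frac{k}{n}<\sum_{j=1}^k\mu(u^{-1}(y_j))=\mu\Big(\bigcup_{j=1}^k u^{-1}(y_j)\Big)\le\mu(I)=b-a.
\end{equation}
Hence $k<n(b-a)$, so $B_n$ has at most $n(b-a)$ elements and is finite.

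The only genuine obstacle is the non-measurable case, where outer measure is merely subadditive. I expect to avoid it by assuming measurability, which is all Theorem \ref{LipschitzApprox} requires because continuous functions have closed level sets. If full generality were wanted, I would first try to run the inequality above with $\mu^*$ and measurable hulls of the $u^{-1}(y_j)$. However, disjoint sets with outer measures summing beyond $b-a$ can exist, for instance via Vitali-type constructions, so the lemma should be stated for measurable $u$.
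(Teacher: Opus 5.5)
Your proof is correct and is the same argument as the paper's: split $B(u)$ into the sets where the level set has measure more than (or at least) $1/n$, then use pairwise disjointness of the level sets and $\mu(I)=b-a$ to show each such set has at most $n(b-a)$ elements. Your remark that the level sets must be measurable is a correct clarification; the paper assumes this implicitly, and it holds in every application there since $u$ is continuous.
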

\begin{proof}
    Set $B_m(u)=\{y\in\mathbb{R}:\mu(u^{-1}(y))\ge 1/m\}$, then
\begin{equation}
    \nonumber
    B(u)=\bigcup_{m=1}^\infty B_m(u).
\end{equation}
Take $y_1,y_2,\ldots, y_N\in B_m(u)$. The sets $u^{-1}(y_1),u^{-1}(y_2),\ldots, u^{-1}(y_N)$ are pairwise disjoint and
\begin{equation}
    \nonumber
    (b-a)\ge\mu\left(\bigcup_{j=1}^Nu^{-1}(y_j)\right)=\sum_{j=1}^N\mu(u^{-1}(y_j))\ge\frac{N}{m}.
\end{equation}
Hence, $N\le m(b-a)$ so $B_m(u)$ contains at most $m(b-a)$ elements. Since each $B_m(u)$ is finite, $B(u)$ is countable.
\end{proof}
We now prove the following auxiliary result. 
\begin{lem}
    \label{constructiveLemma}
    Assume $u\in C(I)$ and let $\lambda>0$ and $\delta>0$ be arbitrary numbers. There exists a function $h$ and a measurable set $G$ with the following properties:
    \begin{enumerate}
        \item $h\in{\rm Lip}(I)$ with $|h|_{{\rm Lip}(I)}=\lambda$ and $\max_I |h(x)|\le\lambda|I|$;
        \item $\mu(G)<\delta$;
        \item if $v=u+h$, then for every $y\in\mathbb{R}$
    \begin{equation}
        \nonumber
        \mu(v^{-1}(y)\setminus G)=0.
    \end{equation}
    \end{enumerate}
\end{lem}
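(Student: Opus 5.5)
Use Lemma \ref{countLemma} to reduce to finitely many bad levels, then tilt $u$ on a neighbourhood of each bad level set so that those points leave the level. By Lemma \ref{countLemma}, $B(u)=\{y_1,y_2,\dots\}$ is countable and $\sum_j\mu(u^{-1}(y_j))\le |I|$. Choose $N$ with
\[
\sum_{j>N}\mu(u^{-1}(y_j))<\delta/2 .
\]
Let $E=\bigcup_{j\le N}u^{-1}(y_j)$; it is compact because $u$ is continuous. The part of $G$ that will absorb the tail levels is $G_1=\bigcup_{j>N}u^{-1}(y_j)$, which has measure $<\delta/2$. Every other level of $u$ has measure zero, so only the finitely many levels $y_1,\dots,y_N$ need repair.

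By outer regularity, pick an open set $U\supset E$ with $\mu(U\setminus E)<\delta/2$; it is a countable union of disjoint open intervals $J_k$. On each $J_k$ define $h$ as a tent of slope $\pm\lambda$: zero at the endpoints, rising linearly to the midpoint. Set $h=0$ off $U$, and if a component touches an endpoint of $I$, use a one-sided ramp. Then $h$ is $\lambda$-Lipschitz, $|h|\le\lambda|I|$, and we may adjust so that $|h|_{{\rm Lip}}=\lambda$ exactly (if $U$ is empty, add a small global linear term). Put $G=G_1\cup(U\setminus E)$, so $\mu(G)<\delta$.

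Now check property (3). Fix $y$. Off $U$ we have $v=u$, and $u$ there takes none of the values $y_1,\dots,y_N$ on a positive-measure set. The points of $v^{-1}(y)$ lying outside $U\cup G_1$ are therefore in $u^{-1}(y)$ with $y\notin\{y_j\}_{j\le N}$, and such a $y$ is either outside $B(u)$ (measure zero) or a tail level (inside $G_1$). The points of $v^{-1}(y)\cap(U\setminus E)$ lie in $G$. It remains to show
\[
\mu\bigl(v^{-1}(y)\cap E\bigr)=0.
\]
On $E\cap J_k$, $u$ takes only the finitely many values $y_1,\dots,y_N$, so $v=y$ forces $h(x)=y-y_j$ for some $j\le N$. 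On each half of the tent, $h$ is strictly monotone, so each equation $h=c$ has at most one solution per half of each $J_k$. That gives countably many points, which is measure zero.

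The main obstacle is this last verification: ensuring that $h$ has no positive-measure level sets on $E$. The tents avoid the problem because they are piecewise strictly monotone with countably many pieces. One must also check that gluing countably many tents keeps the Lipschitz constant at most $\lambda$. This holds because each tent has slope exactly $\pm\lambda$ and vanishes at its endpoints, so the constant is attained and the sup bound holds.
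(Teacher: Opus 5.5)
Your argument is correct and is essentially the paper's proof: finitely many bad levels, tents of slope $\pm\lambda$ on the components of an open neighbourhood of their union, $G$ made of the tail level sets plus the excess of that neighbourhood, and the count of at most two solutions per component; using one neighbourhood $U\supset E$ instead of the paper's pairwise disjoint $G_n\supset E_n$ is a harmless simplification, because $v=y_j+h$ on $u^{-1}(y_j)$ either way. The one slip is your parenthetical fix for $U=\emptyset$: a global linear term $cx$ breaks the step ``$v=u$ off $U$'' (for $u(x)=-cx$ the sum $u+cx$ is constant on $I$), so instead adjoin to $U$ a short interval $J$ of small measure carrying a tent, noting $J\setminus E\subset G$ --- the degenerate case $B(u)=\emptyset$ is also passed over in the paper's proof.
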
  
\begin{proof}
    We first define the perturbation $h$ based on the level sets of $u$. Enumerate $B(u)=\{y_n\}$ and write $E_n=u^{-1}(y_n)$. Take $N\in\mathbb{N}$ such that
    \begin{equation}
        \nonumber
        \sum_{n>N}\mu(E_n)<\frac{\delta}{2}.
    \end{equation}
    Consider $E_n~~(1\le n\le N)$.
    Since $u\in C(I)$, each $E_n$ is closed, and thus each $E_n$ is compact. Consequently, for $1\le m,n\le N, n\neq m$ there holds ${\rm dist}(E_n,E_m)>0$. We may select pairwise disjoint open sets $G_n~~(1\le n\le N)$ such that $E_n\subset G_n$ and 
    \begin{equation}
        \nonumber
        \sum_{n\le N}\mu(G_n\setminus E_n)<\frac{\delta}{2}.
    \end{equation}
    For every $n$ write $G_n$ as
    \begin{equation}
        \nonumber
        G_n=\bigcup_{k=1}^\infty I_{n,k},
    \end{equation}
    where the intervals $I_{n,k}=(a_{n,k},b_{n,k})$ are pairwise disjoint. Define 
    $$
    \tau_{n,k}(x)=\max\left(0,1-\frac{2|x-m_{n,k}|}{|I_{n,k}|}\right)
    $$
    where $m_{n,k}=(a_{n,k}+b_{n,k})/2$. Set $H_{n,k}=\lambda|I_{n,k}|/2$ and define
    \begin{equation}
        \nonumber
        h_n(x)=\sum_{k=1}^\infty H_{n,k}\tau_{n,k}(x).
    \end{equation}
    For fixed $n$ the support of $\tau_{n,k}$ is $I_{n,k}$, so there is no convergence issue in the definition of $h_n$. Moreover, $h_n$ is supported on $G_n$ and $|h_n|_{{\rm Lip}(I)}=\lambda$.
    Set
    \begin{equation}
        \nonumber
        h(x)=\sum_{n\le N}h_n(x),
    \end{equation}
    then $|h|_{{\rm Lip}(I)}=\lambda$ and $h$ is supported on the union of $G_n~~(1\le n\le N)$. Since there is $x_0$ with $h(x_0)=0$, we have $|h(x)|=|h(x)-h(x_0)|\le\lambda(b-a)$. Denote
    \begin{equation}
        \nonumber
        G=\left(\bigcup_{n>N}E_n\right)\bigcup\left(\bigcup_{n\le N}(G_n\setminus E_n)\right).
    \end{equation}
    Set $v=u+h$, we shall prove that $\mu(v^{-1}(y)\setminus G)=0$ for every $y\in\mathbb{R}$. Fix $c_1\in\mathbb{R}$ and set $A:=v^{-1}(c_1)\setminus G$.
    Note that for any $x\in A$ we have $x\notin E_n$ for any $n>N$ and $x\notin G_n\setminus E_n$ for $n\le N$. Define
    \begin{equation}
        \nonumber
        A_1=\{x\in A: u(x)\notin\{y_1,y_2,\ldots y_N\}\},\quad A_2=A\setminus A_1.
    \end{equation}
    If $x\in A_1$, then by definition of $E_n$ we have that $x\notin E_n$ for $n\le N$. Therefore, $x\notin G_n$ for any $n\le N$. Consequently, $x\notin\text{supp}(h)$ and it follows that $u(x)=v(x)=c_1$ for all $x\in A_1$.
    Since $x\in A_1$, we have $c_1\neq y_n$ for all $1\le n\le N$. Further, since $x\notin G$, we also cannot have $c_1=y_n$ for some $n_0>N$, since that would imply $x\in E_{n_0}$. Hence,
    $$
    A_1\subset u^{-1}(c_1)
    $$
    and the set $u^{-1}(c_1)$ has measure 0, since $c_1\notin\{y_n\}$.
    Take now $x\in A_2$. Then $u(x)\in\{y_1,y_2,\ldots,y_N\}$ whence
    \begin{equation}
        \nonumber
        A_2\subset \bigcup_{n=1}^NE_n
    \end{equation}
    We shall prove that $\mu(A_2\cap E_n)=0$, thus showing $\mu(A_2)=0$. For every $x\in A_2\cap E_n$ there holds $v(x)=y_n+h_n(x)=c_1$. Since the equation $h_n(x)=c_1-y_n$ has at most two solutions on each component $I_{n,k}$ of $G_n$, the set $A_2\cap E_n$ is countable. Hence, $\mu(A_2\cap E_n)=0$.    
\end{proof}
\begin{proof}[Proof of Theorem \ref{LipschitzApprox}]
The proof uses Baire's category theorem (see e.g. \cite[Chapter 4]{ShSt11}). Fix $\epsilon>0$ and define 
\begin{equation}
    \nonumber
    X_\epsilon=\{g\in{\rm Lip}(I): |g|_{{\rm Lip}(I)}\le\epsilon\}.
\end{equation}
Then $X_\epsilon$ is a closed subset of the complete metric space $(C(I),d)$, where $d$ is the supremum metric
$$
d(g_1,g_2)=\max_{x\in I}|g_1(x)-g_2(x)|.
$$
Indeed, assume that $\{g_n\}\subset X_\epsilon$ and $d(g_n,g)\rightarrow0$. We must show that $g\in X_\epsilon$.
This is clear, for any $x,y\in I, x\neq y$, since
$$
\frac{|g(x)-g(y)|}{|x-y|}=\lim_{n\rightarrow\infty}\frac{|g_n(x)-g_n(y)|}{|x-y|}\le\epsilon.
$$
Define now
$$
F_m=\{g\in X_\epsilon: \text{ there is }y\in\mathbb{R}\text{ such }\mu((f+g)^{-1}(y))\ge 1/m\}.
$$
Then
\begin{enumerate}
    \item[(a)] each $F_m$ is a closed set,
    \item[(b)] no $F_m$ contains a ball.
\end{enumerate}
Fix $m$. We first prove (a). Assume that $g$ is a fixed limit point of $F_m$, we must show that $g\in F_m$. Take a sequence $\{g_n\}\subset F_m$ such that $d(g_n,g)\rightarrow0$. For each $n$, there is $y_n$ such that
\begin{equation}
    \nonumber
    \mu(\{x\in I:f(x)+g_n(x)=y_n\})\ge\frac{1}{m}.
\end{equation}
By the uniform convergence of $\{g_n\}$ to $g$, the sequence $\{y_n\}$ is contained in a bounded interval $[s_1-1,s_2+1]$ where 
$$
s_1=\min_I(f(x)+g(x))\quad\text{and}\quad s_2=\max_I(f(x)+g(x)).
$$
Therefore, we may assume (after passing to a subsequence) that $y_n$ is convergent. Let $\lim_{n\rightarrow\infty}y_n=y_0$. We shall show that
\begin{equation}
    \nonumber
    \mu(\{x\in I:f(x)+g(x)=y_0\})\ge\frac{1}{m},
\end{equation}
demonstrating that $g\in F_m$. For each $j$ define
$$
K_j=\{x\in I:y_0-1/j\le f(x)+g(x)\le y_0+1/j\}
$$
and 
$$
K_\infty=\{x\in I:f(x)+g(x)=y_0\}.
$$
Clearly 
$$
K_{j+1}\subset K_j,\quad K_\infty=\bigcap_jK_j.
$$
Fix $j$. There exists $n_j$ such that if $n\ge n_j$, then
$$
y_0-\frac{1}{2j}<y_n<y_0+\frac{1}{2j},\quad\max_{x\in I}|g(x)-g_n(x)|<\frac{1}{2j}
$$
Then it is clear that
$$
\{x\in I:f(x)+g_{n_j}(x)=y_{n_j}\}\subset K_j.
$$
Thus, $\mu(K_j)\ge1/m$. From this it follows that $\mu(K_\infty)\ge1/m$, whence $g\in F_m$. We continue to show that $F_m$ contains no open ball. Take arbitrary $g_0\in X_\epsilon$ and arbitrary $r_0>0$. We want to find $g\in X_\epsilon\cap B(g_0,r_0)$ such that $g\notin F_m$, that is to say,
\begin{equation}
    \nonumber
    \mu((f+g)^{-1}(y))<\frac{1}{m}
\end{equation}
for every $y\in\mathbb{R}$.
Applying Lemma \ref{constructiveLemma} with $u=f+g_0$ and
$$
\lambda=\min\left(\frac{r_0}{b-a}, \epsilon-|g_0|_{{\rm Lip}(I)}\right),\quad\delta=\frac{1}{m},
$$
we obtain $h\in{\rm Lip}(I)$ with $|h|_{{\rm Lip}(I)}=\lambda$, $\|h\|_{L^\infty}\le\lambda\mu(I)$, and a measurable $G$ with $\mu(G)<1/m$ such that for every $y\in\mathbb{R}$
$$
\mu((u+h)^{-1}(y)\setminus G)=0.
$$
Set now $g=g_0+h$, then $g\in X_\epsilon$. Further, $(u+h)=f+g$. Note that for every $y\in\mathbb{R}$
\begin{eqnarray}
    \nonumber
    \mu((u+h)^{-1}(y))&\le&\mu((u+h)^{-1}(y)\setminus G)+\mu((u+h)^{-1}(y)\cap G)\le 0+\mu(G)\\
    \nonumber
    &<&1/m.
\end{eqnarray}
On the other hand, $(u+h)^{-1}(y)=(f+g)^{-1}(y)$ for every $y$ whence $g\notin F_m$. Furthermore,
$$
d(g,g_0)\le\max_I|h(x)|\le\lambda|I|\le r_0,
$$
so $g\in B(g_0,r_0)$. This shows that $F_m$ has empty interior.
Since  $X_\epsilon$ is a closed subset of a complete metric space, the Baire category theorem implies that
$$
X_\epsilon\neq\bigcup_mF_m
$$
or in other words, there exists $g_\epsilon\in X_\epsilon$ such that
$$
g_\epsilon\in \left(\bigcup_mF_m\right)^C=\bigcap_m F_m^C.
$$
Since $g_\epsilon\in F_m^C$
\begin{equation}
    \nonumber
    \mu((f+g_\epsilon)^{-1}(y))<\frac{1}{m}
\end{equation}
for every $y\in\mathbb{R}$. Since this holds for every $m$,
\begin{equation}
    \nonumber
    \mu((f+g_\epsilon)^{-1}(y))=0
\end{equation}
for every $y\in\mathbb{R}$.
\end{proof}

\bibliographystyle{plain}
\bibliography{ref}
\end{document}